\newtheorem {rema}{Remark}
\newtheorem {dfn}{Definition}
\newtheorem {lemma}{Lemma}
\newtheorem {thm}{Theorem}
\def\E{{\mathbb{E\,}}}
\def\P{{\mathbb{P}}}
\def\R{{\mathbb{R}}}
\def\F{{\cal{F}}}
\def\|{\,|\,}
\def\bn#1\en{\begin{align*}#1\end{align*}}
\def\bnn#1\enn{\begin{align}#1\end{align}}
\def\eps{\epsilon}
\title{On the generalization of the GMS evolutionary model}
\author{Skevi Michael and Stanislav Volkov}
\begin{document}
\maketitle
\begin{abstract}
We study a generalization of the evolution model proposed by Guiol, Machado and Schinazi (2010). In our model, at each moment of time a random number of species is either born or removed from the system; the species to be removed are those with the lower fitnesses, fitnesses being some numbers in $[0,1]$. We show that under some conditions, a set of species approaches (in some sense) a sample from a uniform distribution on $[f,1]$ for some $f\in [0,1)$, and that the total number of species forms a recurrent process in most other cases.
\end{abstract}

\section{Bak--Sneppen and Guiol--Machado--Schinazi models}

Over the last years the modeling of biological evolution has received a lot of attention in literature. Many models were proposed to explain and understand how nature works. A question that is a common reference to most research done over this field is why some species survive while others in the same ecosystem go extinct.

One of the models that were proposed for this purpose is the Bak-Sneppen model (BS) which was introduced by Per Bak and Kim Sneppen in 1993. The basic idea of their work was to build a model in which there exists a criterion that would represent the strength or resistance of each vertex in the ecosystem. This criterion is called {\it fitness}. The fitness of a vertex is usually related to its genetic code. The initial idea of their model was that the vertex with the weakest fitness is replaced by a new one. However, this leads to no interactions between the vertices and hence the model did not have receive much interest either from the biological or the mathematical point of view. To include the interaction factor in their model, they suggested that a ``weak'' vertex when leaving the system will also affect vertices that are connected with it and they will be removed from the system as well.

In particular, the BS model consists of an ``ecosystem''  that contains a (fixed) number $N$ of vertices which are located on the circumference of a circle. A quantity between $0$ and $1$ is assigned to each vertex, and it represents its fitness. At each time step the vertex with the lowest fitness is replaced by another one with a random fitness in the interval $[0,1]$. At the same time its two neighbours are also replaced by two other vertices with random fitnesses in $[0,1]$. This way no vertex can secure its survival no matter how ``strong'' its fitness is.

In the 1993 paper Bak and Sneppen showed that their model had the property of self-organized criticality and punctuated equilibrium. In the later years some more interesting results were proved, for example Meester and Znamenski (2003, 2004) studied the limit behaviour of the fitnesses, including a discrete version of the model, and they showed that the mean fitness is less than $1$ for the discrete case, which confirmed that the behaviour is indeed nontrivial. This was also supported by the simulations of the model which suggest that the limit distribution of the collection of fitnesses is uniform over the interval $[f,1]$ for some $f$ that is believed to be close to $2/3$.  However, so far one could not find a theoretical proof to confirm this behaviour.

In 2010 Guiol, Machado and Schinazi considered another stochastic model of evolution (we will refer to it as the GMS model) as an alternative for the BS model, since they believed that the setup of the BS model was a bit artificial and did not represent the nature well.
In the GMS model, the process starts wiht an empty subset of vertices of $[0,1]$. At each step, with probability $p$ a new vertex is born (birth case) and with probability $q=1-p$ one vertex is removed (death case). Each vertex that enters the system is assigned {\it a fitness value} which is an independent random variable uniformly distributed on $[0,1]$. In the death case the vertex with the lowest fitness is removed from the system. In Guiol et al (2010), it was proved that the set of vertices with fitness higher than a certain critical value $f_c=q/p$ will eventually approach a uniform distribution in the corresponding interval, with the error being of order less than $n^{1/2+\eps}$ for any $\eps>0$. Note that this mimics the behaviour which is expected to hold for the BS model.

There are two basic differences between the two models. In the GMS model the number of vertices in the system is random (not fixed) as in the BS model, which seems to be a more realistic approach to an evolutionary model. The second difference is that in the GMS model only the weakest vertex is removed at each time, hence there is no interaction among the vertices of the ecosystem. This means that a ``strong'' vertex is more likely to survive in the GMS model than in the BS model.

Recently, there were some finer results for the GMS model by Ben-Ari et al (2011), which included a $\log\log n$ correction term. Guiol et al in (2011) also discovered a link between the survival time in an evolution model and the Bessel distributions.

The Guiol et al (2010) paper motivated us to consider an extension of the GMS model, in which both the number of newborn and taken away vertices is random. Thus it makes the model even more realistic in expressing nature, as well as providing us with some non-trivial mathematical challenges. In Section~\ref{sec2} we assume that the number of deaths is a bounded random variable and obtain the results similar to those in Guiol et al (2010); this assumption is removed in Section~\ref{sec3} where we study the most general case.

%First we try to generalize the model in terms of the number of species that are born or die at each step. We will derive new results for the cases where these numbers are constant or random. Secondly, we will also look at the case when, instead of removing the species with the lowest fitnesses, we choose to remove the most extreme values i.e. the vertices with relatively high and low values (even though this case does not seem very likely to occur in real life). Finally we will modify the GMS model in terms of the distribution of the fitnesses and derive the corresponding result for any other continuously distributed fitnesses.\\

\section{Multiple random births and deaths at each step}\label{sec2}

In our paper we will assume that at each step the numbers of vertices being born or taken away are random.
%, which will create a more challenging mathematical model than the one considered by Guiol et al (2010).
Namely, suppose that $X$ and $Z$ are two positive integer-valued random variables, $X_n$ ($Z_n$ resp.) are i.i.d.\ random variables with the distribution of $X$ ($Z$ resp.) and $X_n$'s and $Z_n$'s are all independent. Fix $p\in (0,1)$ and set $q=1-p$. At time $n$, the state of the system is a finite subset $T_n$ of vertices in $[0,1]$. By fitness of the vertex we understand its location on the segment $[0,1]$. Note that this setup covers the GMS model if we set $X\equiv Z\equiv 1$.

The system starts with an empty set, $T_0=\emptyset$. At time $n$, with probability $p$ we generate $Z_n$ new vertices, each having a fitness uniformly distributed over $[0,1]$ independently of each other and of anything else, so that $|T_{n+1}|=|T_n|+Z_n$; otherwise with probability $q=1-p$ we remove $X_n$ vertices with the smallest $X_n$ fitnesses, with the agreement that if there are less than $X_n$ vertices in the system, the system becomes empty again; as a result, $|T_{n+1}|=\max\{|T_n|-X_n,0\}$ here. Under some assumptions on the distributions of $X$ and $Z$ we will derive the results for the long-term behaviour of the system.

First, for some constant $f\in(0,1)$ define  $L_n$, $R_n$ and $R'_n$ as follows:
\begin{itemize}
\item[] $L_n :$ set of vertices alive in the system at time $n$ whose fitnesses lie in $[0, f)$
\item[] $R_n :$ set of vertices alive in the system at time $n$ whose fitnesses lie in $[f, 1]$
\item[] $R'_n :$ set of vertices that were born in the system from time $0$ to $n$ and were assigned a fitness in $[f, 1]$.
\end{itemize}
Obviously, $R_n\subseteq R'_n$.

\begin{dfn}
Suppose that $A_1\subseteq A_2\subseteq A_3\dots$ is an infinite sequence of sets, each consisting of a finite number of points in $\R$. We say that {\em $A_n$ approaches a random sample from distribution $F$} if, with probability $1$, there exists another sequence of sets $B_1\subseteq B_2\subseteq B_3\dots$ such that (i) each of these sets is a finite collection of i.i.d.\ random variables with the common distribution $F$; (ii) $|B_n|\to\infty $ as $n\to \infty$; and (iii) $|A_n \Delta B_n|=o(|B_n|)$ as $n\to\infty$. Here $A \Delta B=(A\setminus B) \cup (B\setminus A)$.
\end{dfn}
Let
\bnn\label{eqpc}
 p_c=\frac{\mu_X}{\mu_X+\mu_Z}.
\enn

\begin{thm}\label{emth2}
Assume that there is an (integer) constant  $M>0$ such that $X \leq M$ a.s., and  $\E(Z^2)<\infty$.
Let $\mu_X=\E(X)$ and $\mu_Z=\E (Z)$. Also suppose that $p\in (p_c,1)$ and let
\bnn\label{eqf}
 f=\frac{q}{p} \ \frac{\mu_X}{\mu_Z}\in (0,1).
\enn
Then, for every $\eps > 0$, there are $n_0\in \mathbb{N}$ and $C>0$ such that
$$
0 \leq |R'_n|-|R_n| \leq C \ n^{\frac{1}{2}+\eps} \quad \textrm{for}\ n \geq n_0 .
$$
Moreover, $T_n$ approaches a random sample from $U[f,1]$.
\end{thm}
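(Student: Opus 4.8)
The plan is to reduce everything to the behaviour of three quantities and then to a single random-walk estimate. Write $\ell_n=|L_n|$, $r_n=|R_n|$, so $|T_n|=\ell_n+r_n$, and put $Q_n=|R'_n|-|R_n|=|R'_n\setminus R_n|$, the number of vertices born with fitness in $[f,1]$ that have already been removed by time $n$; the bound $Q_n\ge 0$ is just $R_n\subseteq R'_n$. Because a death step removes the currently lowest fitnesses, a vertex of $R$ is deleted at step $k$ only when fewer than $X_k$ vertices of fitness $<f$ are present, i.e.\ only when $\ell_k<X_k\le M$, and then at most $M$ vertices of $R$ are lost --- with the sole exception of the steps at which the system is reset to $\emptyset$, where all current $R$-vertices may be lost. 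So if $V_n$ denotes the number of death steps $k\le n$ with $\ell_k<M$ and the number of reset steps turns out to be a.s.\ finite, then $Q_n\le M V_n+O(1)$, and the whole theorem follows once we (a) control the growth of $|R'_n|$, (b) show the population is transient (hence reset only finitely often), and (c) bound $\ell_n$ and $V_n$ by $O(n^{1/2+\eps})$.

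For (a), note $|R'_n|=\sum_{k=1}^n\xi_k$ with $\xi_k=0$ on a death step and $\xi_k\sim\mathrm{Bin}(Z_k,1-f)$ on a birth step; these are i.i.d.\ with mean $c:=p(1-f)\mu_Z=p\mu_Z-q\mu_X$, which is strictly positive precisely because $p>p_c$, and with finite variance since $\E(Z^2)<\infty$. Applying Doob's $L^2$ maximal inequality to the martingale $|R'_n|-cn$ and then Borel--Cantelli along a geometric subsequence gives $|R'_n|=cn+O(n^{1/2+\eps})$ a.s.; in particular $|R'_n|\to\infty$. For (b), observe that $|T_n|$ itself is a random walk with i.i.d.\ increments, equal to $+Z_n$ on a birth step and to $-X_n$ on a death step, reflected at $0$; its drift is the same $c=p\mu_Z-q\mu_X>0$, so $|T_n|$ is transient, the system empties only finitely often a.s., and the total number of $R$-vertices lost at those reset steps is a.s.\ bounded by a random constant.

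The heart of the argument is (c). Let $\tau^\ast$ be the last reset time. For $n>\tau^\ast$ the process $\ell_n$ satisfies $\ell_{n+1}=\ell_n+\mathrm{Bin}(Z_n,f)$ on a birth step and $\ell_{n+1}=\max\{\ell_n-X_n,0\}$ on a death step, so it is a random walk reflected at $0$ whose i.i.d.\ increments have, by the very choice $f=\frac{q}{p}\frac{\mu_X}{\mu_Z}$, mean $pf\mu_Z-q\mu_X=0$ and finite variance. For such a recurrent, finite-variance walk the law of the iterated logarithm gives $\max_{k\le n}\ell_k=O(n^{1/2+\eps})$ a.s., and the corresponding estimate for the occupation time of the fixed bounded set $[0,M)$ (for instance via strong approximation by Brownian motion and the behaviour of its local time) gives $V_n=O(n^{1/2+\eps})$ a.s.

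Putting the pieces together, $0\le Q_n=|R'_n|-|R_n|\le M V_n+O(1)\le C\,n^{1/2+\eps}$ for $n\ge n_0$, which is the displayed bound. For the last statement, let $B_n$ be the (a.s.\ pairwise distinct) set of fitness values of all vertices ever born with fitness in $[f,1]$ up to time $n$: by construction these are i.i.d.\ $U[f,1]$, the family is nested, and $|B_n|=|R'_n|\to\infty$. Since $T_n=L_n\cup R_n$ and $R_n\subseteq B_n=R'_n$, we have $T_n\Delta B_n=L_n\cup(R'_n\setminus R_n)$, hence $|T_n\Delta B_n|=\ell_n+Q_n=O(n^{1/2+\eps})=o(n)=o(|B_n|)$, so $T_n$ approaches a random sample from $U[f,1]$. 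The \emph{main obstacle} is the last step of (c): turning the heuristic ``$\ell_n$ is a zero-drift walk'' into rigorous a.s.\ bounds of order $n^{1/2+\eps}$ on both $\max_{k\le n}\ell_k$ and the occupation time of $[0,M)$ while the upward increments $\mathrm{Bin}(Z,f)$ are only in $L^2$, and dovetailing this with the reflection at $0$ and with the finitely-many-resets reduction.
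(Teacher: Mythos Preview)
Your decomposition into (a), (b), (c) and the final assembly are essentially the same as the paper's, and the argument is correct in outline. Two remarks, one minor and one substantive.

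\textbf{Minor.} Your special treatment of ``reset'' steps is unnecessary under the standing hypothesis $X\le M$ a.s. If at a death step the system empties, then $|T_n|<X_n\le M$, so in particular $|R_n|\le |T_n|<M$, and at most $M$ vertices of $R$ are lost there too. Hence every step at which an $R$--vertex dies already satisfies $\ell_k<M$, and the clean inequality $Q_n\le M\,V_n$ holds with no $O(1)$ correction; your part (b) on transience of $|T_n|$ is then superfluous for the displayed bound (though it is still implicit in the last statement).

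\textbf{Substantive: the bound on $V_n$.} For the key step (c) you invoke strong approximation by Brownian motion and the LIL for its local time to get $V_n=O(n^{1/2+\eps})$ a.s. This is morally right, but with only $\E(Z^2)<\infty$ the available coupling error (Strassen-type, $o\bigl((n\log\log n)^{1/2}\bigr)$) is exactly on the scale of the fluctuations you are trying to resolve, so turning it into an a.s.\ occupation-time bound for the \emph{reflected} walk is delicate---which you honestly flag. The paper sidesteps this entirely with a more elementary, self-contained excursion count. Writing $k_n$ for the number of down-crossings of level $M$ by $(\ell_k)$ up to time $n$, it splits
\[
\P\bigl(V_n>2\mu\, n^{1/2+\eps}\bigr)\le
\P\bigl(V_n>2\mu\, n^{1/2+\eps},\ k_n<n^{1/2+\eps}\bigr)
+\P\bigl(k_n\ge n^{1/2+\eps}\bigr).
\]
For the first term, each sojourn of $\ell$ below $M$ is stochastically dominated by a variable with exponential tails (it takes at most $M$ consecutive births landing in $[0,f)$ to climb back above $M$), so a standard large-deviation bound gives an exponentially small probability. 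For the second term, each excursion above $M$ is stochastically at least as long as the first weak descending ladder time $\tau$ of the zero-mean, finite-variance walk with increments $W$; the paper then quotes the Sparre Andersen/Feller arcsine result $\P(\tau\ge n)\sim (\pi n)^{-1/2}$, giving $\P(k_n\ge n^{1/2+\eps})\le \bigl(1-c/\sqrt{n}\bigr)^{n^{1/2+\eps}}\le e^{-\alpha n^{\eps}}$. Borel--Cantelli finishes. This uses nothing beyond $\E W=0$, $\E W^2<\infty$ and the boundedness of the downward jumps, which is precisely the regime you were worried about. What your route buys is a more conceptual picture (occupation time $\approx$ Brownian local time); what the paper's route buys is a proof that needs no invariance principle at all.
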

\begin{proof}
The general skeleton of the proof is similar to that in~\cite{GMS}, although our model requires a deeper analysis.

First, look at those times when $|L_n| \geq M$. In the ``death'' event, $R_n$ is unaffected and all vertices will be removed from the complementary set $L_n$. Hence, for those times
\bnn\label{eqexpW}
\E \left( |L_{n+1}|-|L_n|\| \F_n\right)&=\E [W_n]  = p f \mu_Z -q \mu_X  =  0
\enn
where $\F_n$ is the sigma-algebra generated by the process by time $n$, and the distribution of random variable $W_n$ is given by
\bnn\label{eqWdis}
W_n=\left\{\begin{array}{ll}
Binomial(Z_n,f)& \text{ with probability }p,\\
-X_n& \text{ with probability }q.
\end{array}\right.
\enn
On the other hand, it is at the times when $|L_n|<M$ some vertices may be taken away from the set of $R_n$, resulting in $-M\le |R_{n+1}|-|R_n|\le 0$.

Define $t_n$ to be
$$
t_n= |\{1\leq k \leq n : |L_k| < M\}|
$$
the number of those ``bad'' times. We will show that $t_n$ is of order smaller than $n$.
Let
$$
k_n = \left| \left\{ 1\leq k \leq n : |L_{k-1}| \geq M \ \textrm{and}\ |L_k|<M \right\}\right|
$$
For any $\mu>0$
\bnn\label{eqb12}
 \P   \left( t_n > 2\mu n^{\frac{1}{2}+\eps} \right) \leq \P   \left(t_n>2\mu n^{\frac{1}{2}+\eps} ; k_n < n^{\frac{1}{2}+\eps} \right)+ \P   \left(k_n \geq n^{\frac{1}{2}+\eps} \right)=(I)+(II).
\enn
First, we want to choose an appropriate $\mu$ and hence to get an upper bound on $(I)$.
Set $E_1=0$  and for $i=1,2,\dots$ recursively define
\bn
G_i&=\min\{k> E_i: |L_k|\ge M\},\\
E_{i+1}&=\min\{k> G_{i}: |L_k|< M\}.
\en
Then
\bn
\{0\leq k \leq n : |L_k| < M\}&=\{0,1,2,\dots,n\}\bigcap \left(\bigcup_{i=1}^{\infty} [E_i,G_i)\right)\\
\text{ and } \max\{i:\ E_i\le n\}&=k_n+1.
\en

Let $\lfloor \cdot \rfloor$ denote the integer part of a number.
Observe that $G_i-E_i$ are stochastically smaller than i.i.d.\ non-negative random variables $\xi_i$ with the distribution given by
\bn
\P(\xi_i\ge m)=\left(1-(pf)^M\right)^{\lfloor m/M\rfloor}, \ m=0,1,2,\dots
\en
since for $|L_k|$ to reach $M$, even starting from $0$, it suffices to have $M$ consecutive birth events in which at least one of the new particles is located in $[0,f]$.  Let $\mu= \E \xi_i<\infty$ and
$$
m_n=\lfloor n^{1/2+\eps}\rfloor.$$
Then
\bn
(I) &=  \P   \left(t_n>2\mu n^{\frac{1}{2}+\eps} ; k_n < n^{\frac{1}{2}+\eps} \right) \leq
\P   \left( \sum_{i=1}^{k_n+1} [G_i-E_i] > 2\mu n^{\frac{1}{2}+\eps}\ ,k_n<n^{\frac12+\eps}  \right) \\
&\leq
\P   \left( \sum_{i=1}^{m_n+1} [G_i-E_i] > 2\mu n^{\frac{1}{2}+\eps}\right) \leq
\P   \left( \sum_{i=1}^{m_n+1} \xi_i > 2\mu n^{\frac{1}{2}+\eps}\right)
\leq
\P   \left( \sum_{i=1}^{m_n+1} \xi_i > 2\mu m_n\right).
\en
At this point we will use a large deviation estimate which follows immediately from Lemma~9.4 in Chapter~1.9 of Durrett (1996):

\begin{lemma}\label{lardevdur}
Let ${\bf X}_1, {\bf X}_2,\ldots, {\bf X}_n$ be an i.i.d.\ sequence of random variables with $\mu:=\E {\bf X}_i$ and $\phi(\vartheta):=\E (e^{\vartheta {\bf X}_i})<\infty$ for some positive $\vartheta$. Let $\kappa(\vartheta)=\log \phi(\vartheta)$ and  $S_n={\bf X}_1+{\bf X}_2+\ldots+{\bf X}_n$. Then for $\alpha>\mu$,
\bn
\P  \left(S_n\geq n\alpha\right)\leq \exp\{-n\left(\alpha\vartheta-\kappa(\vartheta)\right)\}.
\en
Moreover, for $\vartheta$ small we have  $\alpha\vartheta-\kappa(\vartheta)>0$.
\end{lemma}
By applying this lemma to the sequence of $\xi_i$ (note that $\E (e^{\vartheta \xi_i})<\infty$ for sufficiently small $\vartheta$  due to the fact that $\xi_i$ is a linear transformation of an exponential random variable), we obtain that there exists $\theta >0$ such that

\bnn\label{eqb1}
(I)\le \P   \left( \sum_{i=1}^{m_n+1} \xi_i > 2\mu m_n \right) \leq \exp\left(-\theta n^{1/2+\eps}\right)
\text{ for all } n.
\enn

Next, we want to get an upper bound on $(II)$.
We have
\bn
(II)=\P\left(k_n\ge n^{\frac12+\eps}\right)\le \P\left(\bigcap_{i=1}^{\ m_n}\left\{E_{i+1}-G_i<n\right\}\right).
\en
At the same time, for each $i\ge 1$, $E_{i+1}-G_i$ is stochastically smaller than a random variable $\tau_i\ge 1$,  where $\tau_i$'s are independent and each having the distribution of
\bn
 \tau=\min\{j\ge 1:\ W_1+W_2+\dots +W_j<0\}.
\en
Here $W_k$ are i.i.d.\ random variables with the distribution given by~(\ref{eqWdis}).
%Moreover, $\tau_i$ are independent; therefore
%\bn
%(II) \le \left(\P(\tau_i<n)\right)^{m_n}.
%\en
To estimate $\P(\tau<n)$ we use a result from the general theory of  random walks given in Feller (1966) volume 2 (Theorem 1.a in Chapter XII.8 and Theorem 1 in Chapter XVIII.5 respectively).

\begin{thm}\label{feller1} Let ${\bf X}_1, {\bf X}_2,\ldots, {\bf X}_n$ be an i.i.d, sequence of random variables with distribution $F$ such that $0<F(0)<1$. We define the sums $S_i, i \in \{0,1,\ldots,n\}$ so that $S_0=0$ and $S_n={\bf X}_1+{\bf X}_2+\ldots+{\bf X}_n$ and let
\bn
K_n &=\{0\leq k\leq n :S_k>S_0, S_k >S_1,\ldots, S_k>S_{k-1},S_k \geq  S_{k+1}, \ldots S_k  \geq S_n\}\\
&=\min\{j:\ S_j=\max_{i\in [0,n]} S_i \}.
\en
If the series
\bnn\label{serfel}
\sum_{k=1}^{\infty}\frac 1n \left(\P  \left(S_n>0\right)-\frac{1}{2}\right)
\enn
converges then for $0\leq k\leq n$,
\bn
\P  \left(K_n=k\right) \sim \binom{2k}{k} \binom{2n-2k}{n-k} \frac{1}{2^{2n}}
\en
where $a_n \sim b_n$ means that $\lim _{n\to \infty}{a_n}/{b_n}=1.$
\end{thm}

\begin{thm}\label{feller2}
Consider the notation of Theorem \ref{feller1} and suppose that its conditions hold. If $\E {\bf X}_1=0$ and $\E ({\bf X}_1^2)=\sigma^2<\infty$, then the series (\ref{serfel}) is at least conditionally convergent.
\end{thm}

Now we can apply Theorems~\ref{feller1} and~\ref{feller2} by setting ${\bf X}_i=-W_i$ since
$\E W_i=0$ due to~(\ref{eqexpW}), and $\E W_i^2<\infty$ due to the fact that $X$ is bounded and $\E(Z^2)<\infty$.
Consequently
\bn
 \P(\tau\ge n)&=\P({\bf X}_1\le 0,\ {\bf X}_1+{\bf X}_2\le 0,\ \dots,\ {\bf X}_1+\dots+{\bf X}_n\le 0)
 \\
 &=\P(K_n=0)\sim {2n \choose n} \frac 1{2^{2n}}=\frac {(2n)!}{(n!)^2 \, 2^{2n}}\sim \frac 1{\sqrt{\pi n}}
\en
where we used  Stirling's formula in the last equation.
Combining the above calculations we have that
\bn
(II)&\le (\P(\tau< n))^{m_n}=(1-\P(\tau\ge n))^{m_n}= \left(1-\frac {1+o(1)}{\sqrt{\pi n}}\right)^{m_n}
= \left[\exp\left(-\frac {1+o(1)}{\sqrt{\pi n}}\right)\right]^{m_n}.
\en
Therefore, there is a constant $\alpha>0$ such that for all large $n$
\bnn\label{eqb2}
 (II)  \leq \  e^{-\alpha n^{\eps}}.
\enn
Hence, plugging~(\ref{eqb1}) and~(\ref{eqb2}) into~(\ref{eqb12}) we obtain
\bn
\sum_{n=1}^{\infty}\P\left(t_n>2\mu n^{\frac12+\eps}\right)<\infty
\en
and therefore by the Borel-Cantelli lemma a.s.\ there is an $n_0$ such that $t_n\le 2\mu n^{\frac12+\eps}$ for all $n\ge n_0$.

Since $|R_{n+1}'|-|R_n'|= |R_{n+1}|-|R_n|+\Delta$, where
$\Delta= 0$ if  $|L_n|\ge M$ and $\Delta \in \{0,1,2,\dots,M\}$ if  $|L_n|< M$, we have

\bn
|R'_n|-|R_n| &= \sum_{k=0}^{n-1} \left(|R'_{k+1}| - |R'_{k}|\right)- \sum_{k=0}^{n-1}\left(|R_{k+1}| - |R_{k}|\right)
 \\
 &=\sum_{k=0}^{n-1} \left[(|R'_{k+1}| - |R'_{k}|)- (|R_{k+1}| - |R_{k}|)\right] 1_{\{|L_k|< M\}}
 \le M t_n\le 2\mu M n^{1/2+\eps}
\en
for all $n\ge n_0$.

Finally, to yield the final statement of Theorem~\ref{emth2}, observe that $R'_n$ is a collection of i.i.d.\ random variables from $U[f,1]$, and
$$
|T_n\Delta R_n'|= |L_n|+|R'_n\setminus R_n|.
$$
On one hand, we have $|R'_n\setminus R_n|\le C n^{1/2+\epsilon}$ for large $n$. On the other hand,
$$
\lim_{n\to\infty} \frac{|R_n'|}n= p\mu_Z (1-f)\text{\ \ \ a.s.\ \ and \ \ }
\limsup_{n\to\infty} \frac{|L_n|}n\le \E W=0
\text{\ \ \ a.s.}
$$
(see (\ref{eqexpW})\,) by the strong law. Therefore $|R_n'|\to \infty$ a.s.\ and
\bn
\lim_{n\to\infty} \frac{|T_n\Delta R_n'|}{|R'_n|}
=\lim_{n\to\infty} \frac{|L_n|/n}{|R'_n|/n}+\lim_{n\to\infty} \frac{|R'_n\setminus R_n|/n}{|R'_n|/n}=0
\text{\ \ \ a.s.}
\en
Thus $T_n$ approaches a random sample from $U[f,1]$.
\end{proof}

\section{Number of deaths unbounded}\label{sec3}
In this section we will generalize the model to the case when $X$ is not necessarily bounded. We will show that finiteness of $\E X$ is essentially a necessary and sufficient condition for $T_n$ to approach a random sample from a uniform distribution.

First, we will prove a simple fact about the expectation of a non-negative integer random variables.
\begin{lemma}\label{ellem}
Let $X$ be a non-negative integer random variable. Then $\E X<\infty$ if and only if for every $c>0$
\bn
\sum_{n=1}^{\infty} \P(X\ge cn)<\infty.
\en
\end{lemma}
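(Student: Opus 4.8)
The statement to prove is Lemma~\ref{ellem}: for a non-negative integer random variable $X$, one has $\E X<\infty$ if and only if $\sum_{n=1}^\infty \P(X\ge cn)<\infty$ for every $c>0$. The plan is to reduce everything to the classical tail-sum identity $\E X=\sum_{k=1}^\infty \P(X\ge k)$, which holds for any non-negative integer-valued $X$, and then to sandwich the series $\sum_n \P(X\ge cn)$ between two constant multiples of $\E X$ by comparing the arithmetic progression $\{cn\}_{n\ge1}$ with the full set of integers.

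For the forward direction, suppose $\E X<\infty$. Fix $c>0$ and write $m=\lceil c\rceil\ge 1$. Since $\P(X\ge cn)\le \P(X\ge \lceil cn\rceil)$ and the map $n\mapsto \lceil cn\rceil$ is nondecreasing, the terms $\P(X\ge cn)$ form a subsequence-type selection from the tail sequence; more precisely, since $cn \ge n$ when $c\ge 1$ one immediately gets $\sum_n \P(X\ge cn)\le \sum_n \P(X\ge n)=\E X<\infty$, and when $c<1$ one groups: each integer $k$ is hit by at most $\lceil 1/c\rceil+1$ values of $n$ with $\lfloor cn\rfloor = k$, so $\sum_{n\ge 1}\P(X\ge cn)\le \sum_{n\ge1}\P\bigl(X\ge \lfloor cn\rfloor\bigr)\le (\lceil 1/c\rceil+1)\sum_{k\ge 0}\P(X\ge k)\le (\lceil 1/c\rceil+1)(1+\E X)<\infty$. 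For the converse, assume the series converges for every $c>0$; apply it with $c=1$ to get $\sum_{n\ge1}\P(X\ge n)<\infty$, which is exactly $\E X<\infty$ by the tail-sum identity. (In fact convergence for a single value $c$ already forces $\E X<\infty$, by the same grouping argument run in reverse, bounding $\E X=\sum_k \P(X\ge k)$ above by a constant times $\sum_n \P(X\ge cn)$ plus the finitely many small terms; but only the $c=1$ instance is needed.)

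There is essentially no serious obstacle here: the whole content is the elementary comparison of a sum over an arithmetic progression with the corresponding sum over all integers, together with monotonicity of tails. The only point requiring a little care is the bookkeeping in the $c<1$ case — counting how many indices $n$ map into each integer $k$ under $n\mapsto \lfloor cn\rfloor$ — which is where I would be slightly careful to get a clean constant, but it is routine. I would write the proof in two short paragraphs, one for each implication, invoking the identity $\E X=\sum_{k\ge1}\P(X\ge k)$ at the outset.
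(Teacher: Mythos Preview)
Your proposal is correct and takes essentially the same route as the paper: both rest on the tail-sum identity $\E X=\sum_{k\ge 1}\P(X\ge k)$ and bound $\sum_n \P(X\ge cn)$ by a constant multiple of $1+\E X$ via an elementary comparison of the progression $\{cn\}$ with the integers. The paper handles $c\le 1$ by comparing with $\P(X\ge n/m)$ for an integer $m>1/c$ rather than your floor-and-count, and it establishes two-sided bounds for each fixed $c$, whereas you (correctly) note that $c=1$ alone already gives the converse; these are cosmetic differences only.
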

\begin{proof}
Let $\mu=\E X=\sum_{n=1}^{\infty} \P(X\ge n)\in[0,\infty]$. First, suppose that $c\le 1$. Then there exists an integer $m>0$ such that $1/m<c$. We have
\bnn\label{eqlem1}
 \mu=\sum_{n=1}^{\infty} \P(X\ge n)\le   \sum_{n=1}^{\infty} \P(X\ge cn)
  \le   \sum_{n=1}^{\infty} \P(X\ge n/m)=m  \sum_{k=1}^{\infty} \P(X\ge k)=m \mu.
\enn
Now if $c>1$, there exists and integer $m>c$. Then
\bnn\label{eqlem2}
 \mu&=\sum_{n=1}^{\infty} \P(X\ge n) \ge -1+\sum_{n=0}^{\infty} \P(X\ge cn)\ge  -1+\sum_{n=0}^{\infty} \P(X\ge nm)
  \nonumber \\
 &\ge -1+\sum_{n=0}^{\infty} \frac 1m \left[ \sum_{k=0}^{m-1} \P(X\ge nm+k) \right]
 =-1+\frac 1m  \sum_{k=0}^{\infty} \P(X\ge k)=\frac{1+\mu}m-1.
\enn
Together, (\ref{eqlem1}) and (\ref{eqlem2}) yield the statement of the Lemma.
\end{proof}

Recall that $T_n$ is the set of species alive in the system at time $n$, so in case that we have a death event,
$$
|T_{n+1}|=\max\left\{0, |T_n|-X_n\right\}.
$$
Moreover, assuming $p>p_c$, for every $\eps \in [0,1-f)$ where $p_c$ is given by~(\ref{eqpc}) and $f$ is the same as in~(\ref{eqf}) define
\bn
L_n^{\eps}:=T_n \cap \left[0, f+\eps\right) \text{ and } R_n^{\eps}:=T_n \cap \left[f+\eps, 1\right] \\
\en
Note that $L_n^0=L_n$ and $R_n^0=R_n$. Also, define $A_n^{\eps}$ as follows:
$$
A_n^{\eps}=\left\{\text{at time } n \text{ we kill {\it all} vertices in}\ L_n^{\eps}\right\}=\{L_{n+1}^{\eps}=\emptyset\}.
$$

\begin{lemma}\label{Anfin}
Suppose $\mu_Z=\E Z<\infty$, $\mu_X=\E X<\infty$  and $p>p_c$. Then, with probability $1$, $A_n^{\eps}$ occurs finitely often.
\end{lemma}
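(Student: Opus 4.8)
The plan is to show that the events $A_n^\eps$ are rare enough to be summable, and then invoke Borel--Cantelli. The key observation is that $A_n^\eps$ can only occur when $|L_n^\eps|$ is not too large: if at time $n$ we are in a death event and $X_n < |L_n^\eps|$, then $L_{n+1}^\eps \neq \emptyset$. So $A_n^\eps \subseteq \{|L_n^\eps| \le X_n\} \cap \{\text{death at time }n\}$, and hence, conditioning on $\F_n$,
\bn
\P(A_n^\eps) \le \P\bigl(|L_n^\eps| \le X_n,\ \text{death}\bigr) \le q\,\P\bigl(X \ge |L_n^\eps|\bigr)
\en
where the last step uses the independence of $X_n$ from $\F_n$. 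This reduces the problem to controlling the lower tail of $|L_n^\eps|$ — specifically, to showing that $|L_n^\eps|$ grows at least linearly in $n$ with overwhelming probability, after which Lemma~\ref{ellem} (applied with an appropriate constant $c$) finishes the argument.

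First I would set up a drift estimate for $|L_n^\eps|$. At times when $|L_n^\eps| \ge M$-like a threshold (here there is no uniform bound on $X$, so one should instead argue more carefully), the conditional increment of $|L_n^\eps|$ satisfies
\bn
\E\bigl(|L_{n+1}^\eps| - |L_n^\eps| \,\big|\, \F_n,\ |L_n^\eps|\ \text{large}\bigr) = p(f+\eps)\mu_Z - q\mu_X = p\eps\mu_Z > 0,
\en
using $pf\mu_Z = q\mu_X$ from the definition~(\ref{eqf}) of $f$. Thus $|L_n^\eps|$ has strictly positive drift whenever it is away from $0$ — this is exactly where the hypothesis $p > p_c$ (equivalently $f < 1$, so that $\eps$ can be taken positive) enters. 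From this positive-drift structure I would extract, via a standard comparison with a random walk with positive drift and a large-deviation bound (Lemma~\ref{lardevdur}, legitimate because $\E(Z^2) < \infty$ controls the relevant exponential moment on the positive side — though here one only has $\E Z < \infty$, so one may need a truncation or a first-moment/Markov argument instead of a sharp exponential bound), an estimate of the form
\bn
\P\bigl(|L_n^\eps| < c n\bigr) \le \psi(n)
\en
for some constant $c > 0$ depending on $p,\eps,\mu_Z,\mu_X$, where $\sum_n \psi(n) < \infty$. Combined with $\P(A_n^\eps) \le q\,\P(|L_n^\eps| \le X_n)$, splitting on whether $|L_n^\eps| \ge cn$ gives
\bn
\sum_{n=1}^\infty \P(A_n^\eps) \le q\sum_{n=1}^\infty \P(X \ge cn) + q\sum_{n=1}^\infty \psi(n) < \infty
\en
by Lemma~\ref{ellem} (this is the place the hypothesis $\mu_X < \infty$ is used) and the summability of $\psi$. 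Borel--Cantelli then yields that $A_n^\eps$ occurs only finitely often a.s.

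The main obstacle I anticipate is the lower-tail control of $|L_n^\eps|$ without a bound on $X$. Because each death event can now wipe out a large chunk of $L_n^\eps$ (indeed, a death with $X_n$ huge empties it entirely — which is precisely the event $A_n^\eps$ we are trying to rule out), one cannot naively compare $|L_n^\eps|$ to a nice random walk: the very excursions to $0$ that we want to show are rare are built into the dynamics. The resolution is presumably bootstrapping: one first shows $|L_n^\eps|$ is large "most of the time" in a Cesàro/density sense using the positive drift and the finiteness of $\mu_X$ (so the occasional large death contributes only $o(n)$ in total, by the strong law applied to $\sum X_k$), and then upgrades this to the pointwise-eventual statement. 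Care is also needed in that the positive-drift identity requires $|L_n^\eps|$ bounded away from $0$ by enough that a single $Binomial(Z_n, f+\eps)$ or a single $-X_n$ does not distort the computation; handling the boundary region near $|L_n^\eps| = 0$ cleanly — perhaps by a renewal decomposition into excursions as in the proof of Theorem~\ref{emth2}, with the excursion lengths now having only a first moment — is the delicate part of the argument.
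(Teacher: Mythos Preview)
Your overall strategy --- reduce to Borel--Cantelli via $\P(A_n^\eps)\le q\,\P(X\ge |L_n^\eps|)$, establish linear growth of $|L_n^\eps|$, and invoke Lemma~\ref{ellem} --- is exactly the paper's route. What you are missing is the single observation that makes the linear-growth step immediate and dissolves every obstacle you raise.

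The paper simply couples $|L_n^\eps|$ with the \emph{unreflected} random walk $Q_n$ driven by the same increments: $Q_0=0$ and $Q_{n+1}-Q_n$ equals $Binomial(Z_n,f+\eps)$ on a birth and $-X_n$ on a death. Since at a death $|L_{n+1}^\eps|=\max\{|L_n^\eps|-X_n,0\}\ge |L_n^\eps|-X_n$, while at a birth the two increments coincide, one has $|L_n^\eps|\ge Q_n$ for all $n$. The strong law of large numbers (only first moments of $X$ and $Z$ are needed) gives $Q_n/n\to p(f+\eps)\mu_Z-q\mu_X=p\,\eps\,\mu_Z=:2\delta>0$ a.s., hence $|L_n^\eps|>\delta n$ eventually a.s. From there, $A_n^\eps\subseteq\{X_n\ge \delta n\}$ for all large $n$, and Lemma~\ref{ellem} plus Borel--Cantelli finish.

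The difficulty you anticipate --- that a large $X_n$ can wipe out $L_n^\eps$, so ``the very excursions to $0$ that we want to rule out are built into the dynamics'' --- is illusory: truncation at $0$ only pushes $|L_n^\eps|$ \emph{above} the free walk $Q_n$, so it helps rather than hurts. There is no need for large-deviation bounds, no second-moment hypothesis on $Z$, no bootstrapping, and no renewal decomposition. Your proposal has the right skeleton but lacks this coupling, and without it you do not actually close the argument.
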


\begin{proof}
First note that
 $$
 |L_{n+1}^{\eps}| = \begin{cases}
 |L_n^{\eps}|+ Y_n, \textrm{where}\ Y_n \sim Binomial(Z_n,f+\eps),& \mbox{with probability }  p,\\
 \max\{|L_n^{\eps}|-X_n,0\}, & \mbox{with probability }  q.
 \end{cases}
 $$
Now, let $Q_0=0$ and define $Q_i$ recursively as
$$
Q_{n+1} = Q_n+\begin{cases}  Binomial(Z_n,f+\eps), & \mbox{with probability } p,\\
 -X_n, & \mbox{with probability } q.
 \end{cases}
$$
Thus $Q_n$ can be coupled with $|L_n^{\eps}|$ in such a way that $|L_n^{\eps}|\ge Q_n$ for all $n$.
On the other hand, $Q_n$ can be written as a sum of $n$ i.i.d.\ random variables each with expectation
$2\delta:=p\mu_Z(f+\eps)-q\mu_X=p\mu_Z\eps>0$.
By the strong law of large numbers we have
$$
\lim_{n\to\infty} \frac{Q_n}{n}=2\delta \text{\ \ \ a.s.}
$$
Hence
$$
\liminf_{n\to  \infty} \frac{|L_n^{\eps}|}{n} \geq 2\delta  \text{\ \ \ a.s.}
$$
which yields that with probability $1$ there exists a time $N_0\in \mathbb{N}$ such that
$|L_n^{\eps}|>n \delta$ for all $n\geq N_0$. Next we calculate the probability that $A_n^{\eps}$ occurs:
\bn
\P  \left(A_n^{\eps}\right) = q  \P  \left(X_n\ge |L_n^{\eps}|\right)
\le q  \P  \left(X \ge n \delta\right) \text{ for } n\geq N_0.
\en
Consequently, by Lemma~\ref{ellem}, since $\E X<\infty$, we have
\bn
\sum_{n=1}^{\infty} \P  \left(A_n^{\eps}\right)
 \le N_0+q  \sum_{n=1}^{\infty} \P  \left(X \ge n \delta\right) <\infty
\en
and so by the Borel-Cantelli lemma, we have that $A_n^{\eps}$ occurs finitely often with probability $1$.
\end{proof}

Recall that $\mu_Z=\E Z$ and $\mu_X=\E X$.
\begin{thm}\label{emth3}
The following is true.
\begin{itemize}
\item[(a)]
Suppose $\mu_Z=\infty$ and $\mu_X<\infty$. Then $T_n$ approaches a random sample from $U[0,1]$.
\item[(b)]
Suppose $\mu_Z<\infty$. If $\mu_X<\infty$ and $p>p_c$ then  $T_n$ approaches a random sample from $U[f,1]$ where $f$ is given by~(\ref{eqf}).
\item[(c)]
Suppose $\mu_Z<\infty$. If $\mu_X<\infty$ and $p<p_c$, or $\mu_X=\infty$, then  $T_n=\emptyset$ for infinitely many $n$.
\end{itemize}
\end{thm}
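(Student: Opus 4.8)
I would treat the three cases separately; (a) and (c) are short, and (b) is where the real work lies.

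\emph{Part (a) ($\mu_Z=\infty$, $\mu_X<\infty$).} Here I would take $B_n=R'_n$ to be the set of \emph{all} vertices ever born by time $n$: the $R'_n$ increase, each is a finite i.i.d.\ $U[0,1]$ sample, and $T_n\subseteq R'_n$. Since $|R'_n|=\sum_{k<n}Z_k\,1_{\{\text{birth at }k\}}$ is a sum of i.i.d.\ non-negative variables of infinite mean $p\mu_Z$, Kolmogorov's strong law gives $|R'_n|/n\to\infty$ a.s. On the other hand $R'_n\setminus T_n$ is precisely the set of vertices removed up to time $n$, whose cardinality is at most $\sum_{k<n}X_k\,1_{\{\text{death at }k\}}\sim q\mu_X n$ a.s. Hence $|T_n\Delta R'_n|=|R'_n\setminus T_n|=O(n)=o(|R'_n|)$ a.s., which is exactly the desired conclusion with $F=U[0,1]$.

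\emph{Part (b) ($\mu_Z<\infty$, $\mu_X<\infty$, $p>p_c$).} The plan is to run the argument of Theorem~\ref{emth2} with the cutoff $f$ replaced by $f+\eps$, using Lemma~\ref{Anfin} in place of the boundedness of $X$. Fix $\eps\in(0,1-f)$ and let ${R'_n}^{\eps}$ denote the set of all vertices born by time $n$ with fitness in $[f+\eps,1]$. By Lemma~\ref{Anfin}, $A_n^{\eps}$ occurs only finitely often, and by its proof $|L_n^{\eps}|>n\delta$ for all large $n$; let $N_1$ be a random, a.s.\ finite time past which both hold. For $n\ge N_1$ a death event cannot empty $L_n^{\eps}$, so it only removes vertices of fitness $<f+\eps$; therefore (i) after $N_1$ the quantity $|L_n^{\eps}|$ is a random walk with i.i.d.\ increments of mean $p\mu_Z(f+\eps)-q\mu_X=p\mu_Z\eps>0$ (using $q\mu_X=p\mu_Z f$), whence $|L_n^{\eps}|/n\to p\mu_Z\eps$ a.s., and since $L_n\subseteq L_n^{\eps}$, $\limsup_n|L_n|/n\le p\mu_Z\eps$ a.s.; and (ii) any vertex of fitness $\ge f+\eps$ that is ever removed was born before $N_1$, so $|{R'_n}^{\eps}\setminus R_n^{\eps}|\le|{R'_{N_1}}^{\eps}|<\infty$. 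Now set $B_n=R'_n$, an increasing family of finite i.i.d.\ $U[f,1]$ samples with $|B_n|/n\to p\mu_Z(1-f)>0$ by the strong law. Since $T_n\setminus B_n=L_n$ and $B_n\setminus T_n=R'_n\setminus R_n$ consists of the removed vertices of fitness $\ge f$, and this set splits into those of fitness $\ge f+\eps$ (at most $|{R'_{N_1}}^{\eps}|=O(1)$) and those of fitness in $[f,f+\eps)$ (at most the total number of vertices ever born with such a fitness, which is $\sim p\mu_Z\eps\,n$ by the strong law), combining with (i) gives
$$
\limsup_{n\to\infty}\frac{|T_n\Delta B_n|}{|B_n|}\le\frac{2p\mu_Z\eps}{p\mu_Z(1-f)}=\frac{2\eps}{1-f}\qquad\text{a.s.}
$$
As $B_n$ does not depend on $\eps$, intersecting these a.s.\ events over $\eps=1/j$ (with $j$ large enough that $1/j<1-f$) yields $|T_n\Delta B_n|=o(|B_n|)$ a.s.

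\emph{Part (c).} I would write $|T_{n+1}|=\max\{|T_n|+\zeta_n,0\}$ with $\zeta_n=Z_n$ with probability $p$ and $\zeta_n=-X_n$ with probability $q$, and note that the truncation operates exactly on the event $\{|T_{n+1}|=0\}$. If $\mu_X<\infty$ and $p<p_c$, then $\E|\zeta_n|<\infty$ and $\E\zeta_n=p\mu_Z-q\mu_X<0$ (because $p<p_c$); starting from any state, until the first hit of $0$ the process agrees with the unreflected walk $|T_0|+\sum_{k<n}\zeta_k$, which $\to-\infty$ a.s.\ and so becomes $\le0$ at an a.s.\ finite time, at which instant $|T_n|=0$. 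Hence $0$ is reached from every state a.s., and by the Markov property $T_n=\emptyset$ infinitely often a.s. If $\mu_X=\infty$ (and $\mu_Z<\infty$, any $p$), then as in part (a) $|T_n|\le|R'_n|$ with $|R'_n|/n\to p\mu_Z<\infty$ a.s., so on an a.s.\ event $|T_n|\le Cn$ for all $n\ge n_0$ with $C,n_0$ finite; since $X_n$ is independent of $\F_n$ and $|T_n|$ is $\F_n$-measurable,
$$
\P(|T_{n+1}|=0\mid\F_n)\ge q\,\P(X\ge|T_n|)\ge q\,\P(X\ge Cn)\qquad(n\ge n_0),
$$
and $\sum_n\P(X\ge Cn)=\infty$ by Lemma~\ref{ellem} (as $\E X=\infty$). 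The conditional Borel--Cantelli lemma then gives $T_n=\emptyset$ infinitely often a.s.

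The main obstacle is Part (b): with $X$ merely integrable one cannot pin down $|L_n|$ the way the $n^{1/2+\eps}$ bound of Theorem~\ref{emth2} does. The resolution is to bound $|L_n|$ from above by $|L_n^{\eps}|$ — above level $f+\eps$ nothing dies after a finite time (Lemma~\ref{Anfin}), so $|L_n^{\eps}|$ is a genuine random walk with the tiny positive drift $p\mu_Z\eps$ — and then let $\eps\downarrow0$; once this squeeze is in place, the remaining estimates are routine strong-law accounting of births and deaths.
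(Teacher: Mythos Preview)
Your proof is correct and follows essentially the same route as the paper's: part~(a) is identical, part~(b) uses the same $\eps$-shift of the threshold together with Lemma~\ref{Anfin} to freeze the region $[f+\eps,1]$ after a finite time and then lets $\eps\downarrow 0$, and part~(c) uses the same negative-drift and Borel--Cantelli arguments. The only cosmetic differences are that you bound $|L_n|/n$ via $|L_n^{\eps}|/n\to p\mu_Z\eps$ (the paper instead cites the mean-zero walk estimate from the end of the proof of Theorem~\ref{emth2} to get $|L_n|/n\to 0$ directly), and that in the $\mu_X=\infty$ case you invoke the conditional Borel--Cantelli lemma where the paper uses the second Borel--Cantelli lemma on the independent events $\{X_n\ge cn\}$.
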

\begin{rema}
Theorem~\ref{emth3} leaves some gaps: it covers neither the critical case when $\mu_X,\mu_Z<\infty$ and $p=p_c$, nor the general case where both $\mu_X=\mu_Z=\infty$.
\end{rema}
\begin{proof}
(a) Let $B_n$ be the set of all the particles born in the system by time $n$ and let $D_n$ be the set of particles removed from the system by time $n$; therefore $B_n$ is a collection of i.i.d.\ $U[0,1]$ random variables and $T_n=B_n\setminus D_n$. Since at each time $n$ we remove from the system {\em at most} $X_n$ particles, by the strong law of large numbers we have
\bn
\limsup_{n\to\infty} \frac{|D_n|}n & \le q \mu_X <\infty \text{\ \ \ a.s., and}&\\
\lim_{n\to\infty} \frac{|B_n|}n & =p\mu_Z  =\infty \text{\ \ \ a.s.}&
\en
Therefore,
\bn
\limsup_{n\to\infty} \frac{|T_n\Delta B_n|}{|B_n|}
=\limsup_{n\to\infty} \frac{|D_n|}{|B_n|}=\limsup_{n\to\infty} \frac{|D_n|/n}{|B_n|/n}=0 \text{\ \ \ a.s.}
\en
which yields the desired conclusion.

\vskip 5mm

(b) Assume $\mu_X<\infty$ and $p\in(p_c,1)$. Recall that $R'_n $ denotes a set of vertices that were born in the system up to the time $n$ that were assigned a fitness in $[f, 1]$; thus $R'_n$ is a collection of i.i.d. $U[f,1]$ random variables. Moreover, as in the proof of Theorem~\ref{emth2}, $|R'_n|\to \infty$. Fix an $\eps>0$ and observe that $R_n^{\eps}\subseteq R_n\subseteq R'_n$. According to Lemma~\ref{Anfin}, there will be a time $N_1$ such that events $A_n^{\eps}$ do not occur for $n\ge N_1$. This implies that no vertices are taken away from $[f+\eps,1]$ for those $n$, and as a result
$$
\sup_{n} \left|(R'_n\setminus R_n^{\eps})\cap[f+\eps,1]\right|<\infty \text{\ \ \ a.s.}
$$
On the other hand, by the strong law we have $|R'_n\cap[f,f+\eps,1)|/n\to \eps\, p\, \mu_Z$ a.s., therefore
$$
0\le \limsup_{n\to\infty} \frac{|R'_n\setminus R_n|}n
\le
\limsup_{n\to\infty} \frac{|R'_n\setminus R_n^{\eps}|}n\le  \eps \, p\, \mu_Z  \text{\ \ \  a.s.}
$$
Since $\eps>0$ is arbitrary, we conclude
$$
\lim_{n\to\infty} \frac{|R'_n\setminus R_n|}n=0\text{\ \ \  a.s.}
$$
From the end of the proof of Theorem~\ref{emth2} we have
$|R_n'|/n\to p\mu_Z (1-f)$ a.s.
and
$|L_n|/n\to 0$ a.s.
therefore
$$
\limsup_{n\to\infty} \frac{|R'_n\Delta T_n|}{|R'_n|}
= \limsup_{n\to\infty} \frac{|R'_n\setminus R_n|+|L_n|}{|R'_n|}
\le
\limsup_{n\to\infty} \frac{|R'_n\setminus R_n|/n}{|R'_n|/n}
+
\limsup_{n\to\infty} \frac{|L_n|/n}{|R'_n|/n}=0
 \text{\ \ \  a.s.}
$$
which proves that $T_n$ approaches a random sample from $U[f,1]$.

\vskip 5mm

(c)  Now suppose $\mu_X<\infty$ but $p<p_c$. Due to the renewal nature of the process, it is sufficient to demonstrate that there exist a.s.\ at least one $n\ge 1$ such that $T_n=\emptyset$.

Let $W_n$ be i.i.d.\ random variables with the distribution given by
\bn
W_n=\left\{\begin{array}{ll}
 Z_n& \text{ with probability }p,\\
 -X_n& \text{ with probability }q.
\end{array}\right.
\en
Let $\tau=\inf\{n\ge 1:\ W_1+W_2+\dots+W_n\le 0\}$. Then $\tau+1$ has the same distribution as
$\inf\{n\ge 1:|T_n|=0\}$.  Observe that by the strong law
$$
\lim_{n\to\infty} \frac{W_1+\dots+W_n}n= \E W =p\mu_Z-q \mu_X=\mu_X \left(\frac p{p_c}-1\right)<0.
$$
Therefore we must have $\tau<\infty$  a.s.
\vskip 5mm

Finally, assume that $\mu_X=\infty$. By the strong law we have
$$
\limsup_{n\to\infty} \frac{|T_n|}n \le \lim_{n\to\infty} \frac{|B_n|}n = p\,\mu_Z \text{\ \ \ a.s.}
$$
therefore there exists $c>0$ and a positive integer $N_3$ such that $|T_n|\le cn$ for all $n\ge N_3$.
On the other hand, by Lemma~\ref{ellem},
$$
\sum_n \P(X_n\ge cn)=\sum_n \P(X\ge cn)=\infty,
$$
and since the events $\{X_n\ge cn\}$ are independent, by the second Borel-Cantelli Lemma there will be infinitely many $n$ for which $X_n\ge cn\ge |T_n|$ and hence $T_{n+1}=\emptyset$.
\end{proof}

\end{document}